\newcommand{\popo}{\mathbb{P}^1 \times \mathbb{P}^1}
\newcommand{\pnr}{\mathbb{P}^{n_1}\times \cdots \times \mathbb{P}^{n_r}}
\newcommand{\Ix}{I_{\X}}
\newcommand{\N}{\mathbb{N}}
\newcommand{\pr}{\mathbb{P}}
\newcommand{\X}{\mathbb{X}}
\newcommand{\depth}{\operatorname{depth}}
\newcommand{\ua}{\underline{\alpha}}
\numberwithin{equation}{section}
\newtheorem{theorem}{Theorem}
\newtheorem{lemma}[theorem]{Lemma}
\theoremstyle{definition}
\newtheorem{definition}[theorem]{Definition}
\begin{document}


\title{Classifying ACM sets of points in $\popo$ via separators}
\thanks{Version: 30 May 2012}

\author{Elena Guardo}
\address{Dipartimento di Matematica e Informatica\\
Viale A. Doria, 6 - 95100 - Catania, Italy}
\email{guardo@dmi.unict.it}

\author{Adam Van Tuyl}
\address{Department of Mathematics \\
Lakehead University \\
Thunder Bay, ON P7B 5E1, Canada} \email{avantuyl@lakeheadu.ca}
\keywords{separators,\! multiprojective spaces,\! arithmetically
Cohen-Macaulay}
\subjclass{13D40,13D02,13H10,14A15}

\begin{abstract}
The purpose of this note is to give a new, short proof of a
classification of ACM sets of points in $\popo$ in terms of
separators.
\end{abstract}

\maketitle

Throughout this paper $k$ will denote an algebraically closed
field of characteristic zero. Given a  finite set of points $\X
\subseteq \pnr$, it can be shown (see, for example \cite[Theorem
2.1]{GVT1}) that $\dim R/I_{\X} = r$  and $1 \leq \depth R/I_{\X}
\leq r,$   where $R = k[\pnr]$ is the multigraded coordinate ring
of $\pnr$, and $I_\X$ is the multihomogeneous ideal associated to
$\X$.  When $\depth R/I_\X = r$, then we say $\X$ is {\bf
arithmetically Cohen-Macaulay} (ACM). It is natural to ask if one
can classify which finite sets of points in $\pnr$ are ACM. This
problem was initially studied in \cite{GuMaRa} in the case that
$\X \subseteq \popo$.  In particular, ACM sets of points in
$\popo$ were classified in terms of their bigraded Hilbert
functions. Other classifications exist for points in $\popo$, but
the general problem remains;  see \cite{GVT1} for details.

Marino \cite{M} gave a new classification of ACM sets of points in
$\popo$ in terms of separators of points.  Using results of
\cite{GVT,GVT1}, we will give a new, short proof of this result.
We begin by stating the necessary definitions and results.

\begin{definition}
Let $\X$ be a set of distinct points in $\pnr$ and $P \in \X$. A
multihomogeneous form $F\in R$ is a {\bf separator for} $P$ if $F(P)\neq 0$ and $F(Q)=0$ for all $Q \in \X \setminus\{P\}$.
\end{definition}

We induce a partial order on $\N^r$ by setting $(a_1,\ldots,a_r)
\succeq (b_1,\ldots,b_r)$ if $a_i \geq b_i$ for $i=1,\ldots,r$. If
$S \subseteq \N^r$ is a subset, then let $\min S$ denote the set
of minimal elements of $S$ with respect $\succeq$. \noindent
\begin{definition}
Let $\X$ be a set of distinct points in $\pnr$. The {\bf degree of a point} $P\in \X$ is
the set $\deg_{\X}(P)=\min\{ \deg F ~|~ F ~~\text{is a separator for $P\in \X$}\}.$
\end{definition}

The set $\deg_{\X}(P) = \{\ua_1,\ldots,\ua_s\} \subseteq \N^r$ may
have more than one element. If $F$ is a separator of $P$ with
$\deg F = \ua_i \in \deg_{\X}(P)$, then $F$ is essentially unique
(up to scalar multiplication).

\begin{theorem}[{\cite[Corollary 5.4]{GVT1}}]\label{lemmaA} Suppose $\deg_{\X}(P) = \{\ua_1,\ldots,\ua_s\} \subseteq \N^r$.
If $F$ and $G$ are any two separators of $P$ with $\deg F = \deg G
= \ua_i$, then there exists $0 \neq c \in k$ such that
$\overline{G} = \overline{cF} \in R/\Ix$.
\end{theorem}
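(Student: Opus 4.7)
The plan is to produce, by a single explicit linear combination, an element of $\Ix$ relating $F$ and $G$, from which the scalar $c$ falls out directly from the evaluations of $F$ and $G$ at $P$.

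Fix a representative tuple of homogeneous coordinates for $P$ in each of the $r$ factors of $\pnr$, so that $H \mapsto H(P)$ is a well-defined $k$-linear functional on multihomogeneous forms of any given multidegree. Since $F$ and $G$ are separators of $P$, both $F(P)$ and $G(P)$ lie in $k \setminus \{0\}$. Consider the multihomogeneous form of multidegree $\ua_i$
\[
H \;:=\; G(P)\,F \;-\; F(P)\,G.
\]
A direct computation gives $H(P) = G(P)F(P) - F(P)G(P) = 0$, while for any $Q \in \X \setminus \{P\}$ the separator property forces $F(Q) = G(Q) = 0$, so $H(Q) = 0$ as well; thus $H$ vanishes at every point of $\X$.

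Next I would invoke the standard fact that $\Ix = \bigcap_{Q \in \X} I_Q$, where $I_Q$ denotes the multihomogeneous prime ideal of the reduced point $Q$, together with the observation that a multihomogeneous form lies in $I_Q$ precisely when it vanishes at $Q$. Combining these gives $H \in \Ix$, so $\overline{H} = 0$ in $R/\Ix$, i.e.\ $G(P)\,\overline{F} = F(P)\,\overline{G}$. Since $F(P) \neq 0$, setting $c := G(P)/F(P) \in k \setminus \{0\}$ yields the desired relation $\overline{G} = \overline{cF}$.

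The only conceptual subtlety, and really the only place where care is needed, is that $F(P)$ and $G(P)$ depend on the chosen representatives of $P$; however, rescaling the $j$-th factor of $P$ by $\lambda_j \in k \setminus \{0\}$ multiplies both $F(P)$ and $G(P)$ by $\prod_j \lambda_j^{(\ua_i)_j}$, so the ratio $c$ is independent of this choice. I would also note in passing that the argument nowhere uses the minimality of $\ua_i$ in $\deg_{\X}(P)$: the same reasoning shows that any two separators of $P$ of a common multidegree are scalar multiples of each other in $R/\Ix$. The theorem is phrased for the minimal multidegrees because those are the ones entering the classification.
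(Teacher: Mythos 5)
Your argument is correct and complete. The key point --- that $H = G(P)F - F(P)G$ is a multihomogeneous form of degree $\ua_i$ vanishing at every point of $\X$, hence lies in $\Ix = \bigcap_{Q\in\X} I_Q$ since $\X$ is reduced --- is sound, and you rightly flag and dispose of the only subtlety, namely that the scalar $c = G(P)/F(P)$ is independent of the chosen coordinate representatives of $P$. Note that the paper itself offers no proof here; it quotes the result from \cite[Corollary 5.4]{GVT1}, where the statement is obtained from the Hilbert-function characterization of separators: the space $(I_{\X\setminus\{P\}}/\Ix)_{\ua_i}$, which contains the classes of all separators of $P$ of degree $\ua_i$, embeds into $(R/I_P)_{\ua_i} \cong k$ and is therefore at most one-dimensional. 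Your evaluation map $H \mapsto H(P)$ is exactly that embedding made explicit, so the two arguments are the same in substance; yours is the more elementary and self-contained rendering. Your closing observation is also accurate: minimality of $\ua_i$ in $\deg_{\X}(P)$ plays no role, and any two separators of $P$ of a common multidegree agree up to a nonzero scalar in $R/\Ix$.
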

As noted, one will have $|\deg_{\X}(P)| \geq 1$.  However, when $\X$ is ACM, we have:

\begin{theorem}[{\cite[Theorem 5.7]{GVT1}}]\label{theoremB} {Let $\X$ be any ACM set of points in $\pnr.$
Then for any point $P \in \X$ we have $|\deg_{\X}(P)|  = 1$.}
\end{theorem}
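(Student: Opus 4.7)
The plan is to translate the statement into a structural property of the module $M := I_{\X\setminus\{P\}}/\Ix$: once one shows that, under the ACM hypothesis, $M$ is a principal monomial ideal in a polynomial ring, $|\deg_{\X}(P)|=1$ follows immediately.

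First, I would set up the short exact sequence
\begin{equation}\label{eq:planses}
0 \to M \to R/\Ix \to R/I_{\X\setminus\{P\}} \to 0.
\end{equation}
Since $\X$ is reduced, $\Ix = I_{\X\setminus\{P\}}\cap I_P$, and the second isomorphism theorem identifies $M$ with the multigraded ideal $(I_{\X\setminus\{P\}}+I_P)/I_P$ of $R/I_P$. The quotient $R/I_P$ is a multigraded polynomial ring $k[T_1,\ldots,T_r]$ with $\deg T_i = e_i$; since its multigraded pieces are one-dimensional, every multigraded ideal is automatically a monomial ideal in the $T_i$. A multihomogeneous form $F$ of multidegree $\ua$ has nonzero image in $M$ in multidegree $\ua$ precisely when $F$ is a separator of $P$ of multidegree $\ua$, and by Theorem~\ref{lemmaA} this image is unique up to scalar. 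Hence $\deg_{\X}(P)$ is exactly the set of multidegrees of the minimal monomial generators of $M$, and the theorem reduces to showing that $M$ is principal.

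Next, since $H_M(\ua) = H_{R/\Ix}(\ua) - H_{R/I_{\X\setminus\{P\}}}(\ua) = 1$ for $\ua \gg 0$, the ideal $M$ has rank one over $R/I_P$. A rank-one multigraded ideal in a polynomial ring is principal as soon as it is graded free, and by the graded Auslander--Buchsbaum formula this happens iff $M$ is a maximal Cohen--Macaulay $R/I_P$-module. Since every element of $R$ annihilating $M$ must vanish at $P$ (as it kills a separator of $P$ in $R/\Ix$), one has $\operatorname{Ann}_R(M) = I_P$ and therefore $\depth_R M = \depth_{R/I_P} M$. The task thus becomes to prove $\depth_R M = r$.

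The ACM hypothesis gives $\depth_R(R/\Ix) = r$, so the Depth Lemma applied to \eqref{eq:planses} yields $\depth_R M \geq r$ provided $\depth_R(R/I_{\X\setminus\{P\}}) \geq r-1$. Establishing this last bound --- that removing a single point from an ACM set of points in $\pnr$ decreases the depth by at most one --- is the main obstacle, and is the step I would take from \cite{GVT1}; the remaining difficulty is essentially packaged in that reference. Granted this bound, the preceding two paragraphs complete the argument.
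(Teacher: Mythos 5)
The paper quotes this theorem from \cite[Theorem 5.7]{GVT1} without reproducing a proof, so there is no internal argument to measure yours against; I will judge the proposal on its own terms. Your reduction is correct and cleanly executed: identifying $M = I_{\X\setminus\{P\}}/\Ix$ with a multigraded --- hence monomial --- ideal of $R/I_P \cong k[T_1,\ldots,T_r]$, observing that $\deg_{\X}(P)$ is exactly the set of degrees of the minimal monomial generators of $M$, and noting that a rank-one graded ideal of a polynomial ring is principal iff it is graded free iff (by Auslander--Buchsbaum) it has depth $r$. All of this is sound and is very much in the spirit of how \cite{GVT1} organizes separators around the difference $H_{\X} - H_{\X\setminus\{P\}}$.

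The gap is the step you defer, namely $\depth R/I_{\X\setminus\{P\}} \geq r-1$. I do not believe this is available off the shelf in \cite{GVT1}, and, more seriously, it is not a smaller lemma than the theorem: running the depth lemma on your exact sequence in the other direction gives $\depth R/I_{\X\setminus\{P\}} \geq \min\{\depth_R M - 1,\, r\}$, so, given $\depth R/\Ix = r$ and $\dim M = r$, the inequality you need is \emph{equivalent} to $\depth_R M = r$, i.e.\ to the freeness of $M$, i.e.\ to the conclusion of the theorem. For general $r$ your argument therefore reduces the statement to an equivalent one and stops exactly where the real content begins. The saving grace is the case $r=2$: there the needed bound reads $\depth R/I_{\X\setminus\{P\}} \geq 1$, which holds for \emph{every} nonempty finite set of points by \cite[Theorem 2.1]{GVT1} (quoted in the paper's opening paragraph), so your proof is genuinely complete for $\popo$ --- the only case this paper ever uses. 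To prove the theorem as stated in $\pnr$ you still need an independent argument that $M$ is maximal Cohen--Macaulay (equivalently, that deleting a point from an ACM set drops the depth by at most one), and that argument must use the ACM hypothesis in some way beyond the single invocation of $\depth R/\Ix = r$ in the depth lemma.
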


We will make use of the following geometric classification of ACM sets of points in $\popo$.

\begin{theorem}[{\cite[Theorem 4.3]{GVT1}}]\label{theoremC} {Let $\X \subseteq \popo$
be a finite set of points. Then $\X$ is ACM if and only if $\X$ satisfies the property:
whenever $P \times Q$ and $P' \times Q' \in \X$ with $P \neq P'$ and $Q \neq Q'$, then either $P \times Q'$ or $P' \times Q$ (or both) are in $\X$.}
\end{theorem}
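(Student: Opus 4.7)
\textit{Plan.} I will prove the two directions of the equivalence separately: the forward direction ($\X$ ACM implies the combinatorial property) by the contrapositive using Theorem \ref{theoremB}, and the reverse by induction on $|\X|$ after a combinatorial reordering lemma.

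\textit{Forward direction.} Assume the combinatorial property fails, so there exist $P \times Q, P' \times Q' \in \X$ with $P \neq P'$, $Q \neq Q'$, and both $P \times Q', P' \times Q \notin \X$. I will focus on $P' \times Q'$ and show $|\deg_{\X}(P' \times Q')| \geq 2$, which contradicts Theorem \ref{theoremB}. Write $a = |\pi_1(\X)|$, $b = |\pi_2(\X)|$, and let $m_{P'}$ be the number of points of $\X$ in $\{P'\} \times \pr^1$ and $m^{Q'}$ the number in $\pr^1 \times \{Q'\}$. The failure of the property forces $m^{Q'} \leq a-1$ (since $P \in \pi_1(\X)$ but $P \times Q' \notin \X$) and $m_{P'} \leq b-1$. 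I would then exhibit two explicit separators of $P' \times Q'$ as products of linear forms: one, $F_1$, of bidegree $(a-1, m_{P'}-1)$, formed by multiplying bidegree-$(1,0)$ forms vanishing at each $P_i \neq P'$ by bidegree-$(0,1)$ forms vanishing at each $Q_j \neq Q'$ with $P' \times Q_j \in \X$; and the symmetric $F_2$ of bidegree $(m^{Q'}-1, b-1)$. The bad-configuration inequalities make the bidegrees strictly incomparable. To conclude, I would argue that both $F_1$ and $F_2$ sit above minimal elements of $\deg_{\X}(P' \times Q')$ that are themselves distinct. The key technical step here is a restriction argument: for any separator $F$ of $P' \times Q'$ of bidegree $(\alpha,\beta)$, restricting $F$ to $\{P'\} \times \pr^1$ forces $\beta \geq m_{P'}-1$, and restricting to $\pr^1 \times \{Q'\}$ forces $\alpha \geq m^{Q'}-1$, so no separator of bidegree $\prec (a-1,m_{P'}-1)$ other than one of bidegree $(\alpha, m_{P'}-1)$ with $m^{Q'}-1 \leq \alpha \leq a-2$ can exist. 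Choosing the bad pair $(P \times Q, P' \times Q')$ with $m_{P'} + m^{Q'}$ minimal then rules out these intermediate separators, and shows $F_1$ (and symmetrically $F_2$) is minimal.

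\textit{Reverse direction and main obstacle.} Assume the combinatorial property and induct on $|\X|$. The technical heart is a combinatorial \textit{staircase lemma}: under the property one can permute $\pi_1(\X)$ and $\pi_2(\X)$ so that the $\{0,1\}$-incidence matrix of $\X$ has a Young-diagram shape, i.e., if $(i,j)$ is filled and $i' \leq i$, $j' \leq j$, then $(i',j')$ is filled too. Pick an outermost corner $R \in \X$; then $\X' = \X \setminus \{R\}$ still satisfies the property and is ACM by induction. To finish, I would show that adding $R$ back preserves ACM-ness, either by constructing a regular sequence on $R/I_{\X}$ consisting of a generic bidegree-$(1,0)$ form and a generic bidegree-$(0,1)$ form — where the staircase structure verifies non-zero-divisibility step by step — or by computing $H_{\X} - H_{\X'}$ and checking the Giuffrida–Maggioni–Ragusa bigraded Hilbert-function criterion for ACM sets in $\popo$. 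The reverse implication is the main obstacle: the forward direction is a compact separator-counting argument with Theorem \ref{theoremB}, but the reverse requires bridging a purely combinatorial geometric condition and the algebraic conclusion $\depth R / I_{\X} = 2$, which involves a delicate combinatorial lemma together with an explicit Hilbert-function or regular-sequence calculation.
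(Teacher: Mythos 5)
The paper does not actually prove this statement; it is imported verbatim from \cite[Theorem 4.3]{GVT1} as an ingredient for the main theorem, so there is no internal proof to compare against, and your proposal must stand on its own. In the forward direction your two separators $F_1$, $F_2$ of incomparable bidegrees $(a-1,m_{P'}-1)$ and $(m^{Q'}-1,b-1)$ are correct, and your restriction argument correctly shows that every separator of $P'\times Q'$ has bidegree $(\alpha,\beta)$ with $\alpha\geq m^{Q'}-1$ and $\beta\geq m_{P'}-1$; consequently, if $|\deg_{\X}(P'\times Q')|=1$ the unique minimal degree would have to be exactly $(m^{Q'}-1,m_{P'}-1)$. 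The whole proof therefore reduces to excluding a separator of that one bidegree, and this is precisely the step you do not supply: choosing the bad pair with $m_{P'}+m^{Q'}$ minimal is a combinatorial normalization with no visible bearing on whether such an algebraic object exists, and you give no argument extracting a contradiction from it. The gap can be closed, but by a different device --- the one the paper uses in Case 4 of its own main proof: such an $F$ restricts to a separator of $P'\times Q'$ in the cross $\X_{P'}\cup\X_{Q'}$, which is ACM with degree set $\{(m^{Q'}-1,m_{P'}-1)\}$ by Theorem \ref{LemmaE}, so by Theorem \ref{lemmaA} $F$ agrees, modulo the ideal of the cross, with the explicit product of linear forms, and that product does not vanish at $P\times Q$ precisely because $P\times Q'$ and $P'\times Q$ are absent from $\X$. (You should also verify that Theorem \ref{theoremB}, quoted from the same source and appearing there \emph{after} the statement you are proving, is not itself derived from it, else the forward direction is circular.)

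In the reverse direction your staircase lemma is correct and worth keeping: the stated property is equivalent to the sets $C_P=\{Q : P\times Q\in\X\}$ being totally ordered by inclusion, hence to a Young-diagram incidence matrix after permuting coordinates. But the entire content of this implication is concentrated in the step you leave open, namely that adjoining an outer corner to the ACM set $\X'$ preserves ACM-ness, and neither of your proposed routes is executed: the regular-sequence route requires controlling the associated primes of $R/(I_{\X}+(L))$ for a general form $L$ of bidegree $(1,0)$ (the first nonzerodivisor is easy; the second is exactly where the staircase combinatorics must be used, and ``verifies non-zero-divisibility step by step'' is not an argument), while the Hilbert-function route requires actually computing $H_{\X}$ from the staircase and checking the criterion of \cite{GuMaRa}. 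As written, the reverse direction is a plan rather than a proof, and since that is the direction carrying the real content of the theorem, the proposal as a whole has a genuine gap.
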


The following two results found in \cite{GVT}, compute the degree of a point in some special cases.
\begin{lemma}[{\cite[Lemma 4.3]{GVT}}]\label {LemmaD} With the above notation, we have
\begin{enumerate}
\item  {Let $\{Q_1,\ldots,Q_b\}$ be $b\geq 2$ distinct points in $\pr^1$, and let $P_1$
be any point of $\pr^1$ (we allow the case the at $P_1 = Q_i$ for some $i$).
Consider the set of points \[\X = \{P_1\times Q_1,P_1 \times Q_2,\ldots,P_1 \times Q_b\} \subseteq \popo.\]
Then $\X$ is ACM, and furthermore, $\deg_{\X}(P_1\times Q_1) = \{(0,b-1)\}$.}

\item  {Let $\{P_1,\ldots,P_a\}$ be $a\geq 2$ distinct points in $\pr^1$,
and let $Q_1$ be any point of $\pr^1$ (we allow the case the at
$Q_1 = P_i$ for some $i$). Consider the set of points \[\X =
\{P_1\times Q_1,P_2 \times Q_1,\ldots,P_a \times Q_1\} \subseteq
\popo.\]Then $\X$ is ACM, and furthermore, $\deg_{\X}(P_1\times
Q_1) = \{(a-1,0)\}$.}
\end{enumerate}
\end{lemma}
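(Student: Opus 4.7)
The plan is to prove part (1); part (2) then follows by interchanging the roles of the two $\pr^1$ factors. I would split the argument into three pieces: verifying that $\X$ is ACM, bounding $\deg_\X(P_1\times Q_1)$ from above by an explicit separator, and bounding it from below by a degree count.

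The ACM assertion is an immediate application of Theorem \ref{theoremC}: every point of $\X$ has the same first coordinate $P_1$, so one cannot find two points $P\times Q$ and $P'\times Q'$ in $\X$ with $P\neq P'$, and hence the geometric criterion is vacuously satisfied. Theorem \ref{theoremB} then forces $\deg_\X(P_1\times Q_1)$ to consist of a single tuple, say $\ua$.

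For the upper bound I would exhibit a separator of bidegree $(0,b-1)$. For each $j=2,\ldots,b$, pick a linear form $L_j$ in the variables of the second $\pr^1$ that vanishes at $Q_j$, and set $F=L_2\cdots L_b$. Since the $Q_j$ are distinct, $F(P_1\times Q_1)\neq 0$, while $F(P_1\times Q_j)=0$ for $j\geq 2$. Thus $F$ is a separator of bidegree $(0,b-1)$, so $\ua \preceq (0,b-1)$; writing $\ua=(0,c)$, we have $c\leq b-1$.

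The only delicate step, and the main obstacle, is the reverse bound $c\geq b-1$. Any bihomogeneous form of bidegree $(0,c)$ involves only the variables of the second $\pr^1$, so it is a degree-$c$ form in two variables, factors into at most $c$ linear factors, and therefore vanishes on at most $c$ distinct points of $\pr^1$. Since a separator of bidegree $(0,c)$ for $P_1\times Q_1$ must vanish at the $b-1$ distinct points $Q_2,\ldots,Q_b$, this forces $c\geq b-1$. Combined with the previous inequality we obtain $c=b-1$, whence $\deg_\X(P_1\times Q_1)=\{(0,b-1)\}$.
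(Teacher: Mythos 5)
Your proposal is correct. Note that the paper does not actually prove this lemma---it is quoted verbatim from \cite[Lemma 4.3]{GVT}---so there is no in-paper argument to compare against; your write-up supplies a valid, self-contained proof. All three steps check out: the ACM claim is indeed vacuous for Theorem \ref{theoremC}, the product $L_2\cdots L_b$ is a separator of bidegree $(0,b-1)$, and the root count for a nonzero binary form of degree $c$ in $y_0,y_1$ gives $c\geq b-1$. The only remark worth making is that your appeal to Theorem \ref{theoremB} (to know the degree set is a singleton of the form $(0,c)$) is harmless here but not needed: restricting an arbitrary separator $G$ of bidegree $(c_1,c_2)$ to the line $\{P_1\}\times\pr^1$ yields a nonzero binary form of degree $c_2$ vanishing at $Q_2,\ldots,Q_b$, so every separator has degree $\succeq(0,b-1)$ in the second coordinate, and minimality then forces $\deg_{\X}(P_1\times Q_1)=\{(0,b-1)\}$ directly. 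This variant keeps the lemma independent of the ACM machinery, which is aesthetically preferable given that the lemma is itself used to study ACM-ness.
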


\begin{theorem}[{\cite[Theorem 4.4]{GVT}}] \label{LemmaE}
Let $\X$ be an ACM set of points in $\popo$.  For any $P \times Q \in \X$ let
\[\X_{P,1} = \{P \times Q, P \times Q_2,\ldots, P\times Q_b\} \subseteq \X\]
be all the points of $\X$ whose first coordinate is $P$, and let \[\X_{Q,2} = \{P \times Q, P_2 \times Q,\ldots, P_a\times Q\} \subseteq \X\] be all the points of $\X$ whose second coordinate is $Q$.  Then \[\deg_{\X}(P\times Q) = \{(|\X_{Q,2}|-1,|\X_{P,1}|-1)\} = \{(a-1,b-1)\}.\]
\end{theorem}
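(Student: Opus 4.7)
The plan is to pin down $\deg_{\X}(P\times Q) = \{(a-1,b-1)\}$ in three moves. Because $\X$ is ACM, Theorem~\ref{theoremB} collapses $\deg_{\X}(P\times Q)$ to a singleton $\{(c,d)\}$, so it suffices to exhibit a separator of bidegree $(a-1,b-1)$ (which forces $(c,d)\preceq(a-1,b-1)$) and then use the row and column subconfigurations $\X_{Q,2}$ and $\X_{P,1}$ to force $(c,d)\succeq(a-1,b-1)$. Lemma~\ref{LemmaD} supplies the degree of a point in a row/column configuration, and Theorem~\ref{theoremC} is what ties the geometry of $\X$ to the separator construction.

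For the upper bound I would build the separator as a product. Choose $F_1 \in R$ of bidegree $(a-1,0)$ given by a product of linear forms in the first coordinate that vanish at $P_2,\dots,P_a$ but not at $P$, and analogously $F_2$ of bidegree $(0,b-1)$ vanishing at $Q_2,\dots,Q_b$ but not at $Q$. Set $F=F_1F_2$; clearly $F(P\times Q)\neq 0$, and for any other $R\times S \in \X$ one checks by cases that $F(R\times S)=0$: if $R=P$, then $S\in\{Q_2,\ldots,Q_b\}$ so $F_2(S)=0$; if $S=Q$, then $R\in\{P_2,\ldots,P_a\}$ so $F_1(R)=0$; and if $R\neq P$ and $S\neq Q$, then Theorem~\ref{theoremC} forces $R\times Q \in \X$ or $P\times S \in \X$, hence $R\in\{P_2,\ldots,P_a\}$ or $S\in\{Q_2,\ldots,Q_b\}$, so again $F$ vanishes. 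This last case is where I expect the main subtlety: without the ACM hypothesis, such ``off-cross'' points could survive $F_1F_2$ and the construction would fail; Theorem~\ref{theoremC} is exactly the tool that closes that gap.

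For the lower bound, observe that any separator of $P\times Q$ in $\X$ is automatically a separator of $P\times Q$ in the smaller set $\X_{Q,2}$, and Lemma~\ref{LemmaD}(2) gives $\deg_{\X_{Q,2}}(P\times Q)=\{(a-1,0)\}$. Since the right-hand side is the (unique) minimal bidegree, every such separator has bidegree $\succeq(a-1,0)$, which forces $c \geq a-1$. Symmetrically, restricting to $\X_{P,1}$ and applying Lemma~\ref{LemmaD}(1) yields $d \geq b-1$. Combining with the upper bound closes the proof: $(c,d)=(a-1,b-1)$, as required.
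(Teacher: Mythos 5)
Your argument is correct. A point of order first: the paper offers no proof of this statement --- it is imported wholesale as \cite[Theorem 4.4]{GVT} --- so what you have written supplies an argument where the paper has none, rather than paralleling an existing one. Your two-sided strategy is the natural one and the steps check out. For the upper bound, $F=F_1F_2$ has bidegree $(a-1,b-1)$, is nonzero at $P\times Q$ because each factor cuts out a point of $\pr^1$ different from $P$ (resp.\ $Q$), and your three-way case split correctly isolates the only place the ACM hypothesis enters: for $R\times S\in\X$ with $R\neq P$ and $S\neq Q$, Theorem \ref{theoremC} forces $R\times Q\in\X$ or $P\times S\in\X$, hence $R\in\{P_2,\dots,P_a\}$ or $S\in\{Q_2,\dots,Q_b\}$, and $F$ vanishes there. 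For the lower bound, a separator in $\X$ is automatically a separator in the subsets $\X_{Q,2}$ and $\X_{P,1}$, and Lemma \ref{LemmaD} together with the fact that in $\N^2$ every element of a subset dominates one of its minimal elements gives $(c,d)\succeq(a-1,0)$ and $(c,d)\succeq(0,b-1)$. Two small remarks. First, the appeal to Theorem \ref{theoremB} is actually superfluous: your lower bound shows that \emph{every} separator of $P\times Q$ in $\X$ has degree $\succeq(a-1,b-1)$, and the upper bound exhibits a separator of exactly that degree, so $(a-1,b-1)$ is the unique minimal element and $|\deg_{\X}(P\times Q)|=1$ falls out for free. Second, when $a=1$ or $b=1$ Lemma \ref{LemmaD} does not literally apply (it requires at least two points), but then the corresponding inequality is vacuous and $F_1$ or $F_2$ is an empty product, so nothing breaks; it is worth saying so explicitly.
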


We are now ready to give a new proof for Marino's main result.

\begin{theorem}[{\cite[Proposition 6.7]{M}}] Let $\X \subseteq \popo$ be a set of distinct points.
Then $\X$ is ACM if and only if $|\deg_{\X}(P)| = 1$ for all $P \in \X$.
\end{theorem}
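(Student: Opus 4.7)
The plan is to apply Theorem \ref{theoremB} for the forward direction and prove the converse by contrapositive: assuming $\X$ is not ACM, I will exhibit a point of $\X$ with at least two minimal separator degrees. By Theorem \ref{theoremC} there exist $P \times Q, P' \times Q' \in \X$ with $P \neq P'$, $Q \neq Q'$, and $P \times Q', P' \times Q \notin \X$. Set $a = |\X_{Q,2}|$, $b = |\X_{P,1}|$, $b' = |\X_{P',1}|$, and after possibly swapping the two witness points (the witness condition is symmetric) assume $b \leq b'$. The target point will be $P \times Q$.

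A restriction argument first gives the lower bound that any separator bidegree for $P \times Q$ dominates $(a-1, b-1)$: for any separator $F$ the restriction $F(x, Q)$ is a nonzero polynomial in the first variables with $a - 1$ distinct zeros (the first coordinates of $\X_{Q,2} \setminus \{P \times Q\}$), forcing its degree to be at least $a - 1$, and symmetrically for the second-degree. The core step is to show that $(a-1, b-1)$ is \emph{not} achieved by any separator. If $F$ had this bidegree, then $F(x, Q)$ is forced (up to scalar) to be the product of the $a - 1$ linear forms vanishing at $\pi_1(\X_{Q,2}) \setminus \{P\}$; since $P' \times Q \notin \X$, the point $P'$ is not among those zeros and $F(P', Q) \neq 0$. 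On the other hand, $F(P', y)$ is a polynomial of degree at most $b - 1$ in the second variables that must vanish at all $b' \geq b > b - 1$ second coordinates of $\X_{P',1}$, forcing $F(P', y) \equiv 0$ and contradicting $F(P', Q) \neq 0$.

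With $(a-1, b-1)$ ruled out, I would then exhibit two explicit separators of incomparable bidegrees via products of linear forms in the spirit of Lemma \ref{LemmaD}. Writing $C = \X \setminus (\X_{P,1} \cup \X_{Q,2})$, let $F_1$ be the product of one linear form in the first variables vanishing at each element of $(\pi_1(\X_{Q,2}) \setminus \{P\}) \cup \pi_1(C)$ (each chosen nonzero at $P$) and one linear form in the second variables vanishing at each element of $\pi_2(\X_{P,1}) \setminus \{Q\}$ (each nonzero at $Q$). A case-check shows $F_1$ vanishes on $\X \setminus \{P \times Q\}$ and is nonzero at $P \times Q$, with bidegree $(a - 1 + t_1, b - 1)$, where $t_1 = |\pi_1(C) \setminus \pi_1(\X_{Q,2})|$. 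A symmetric construction gives $F_2$ of bidegree $(a - 1, b - 1 + t_2)$. Because $P' \times Q' \in C$ with $P' \times Q \notin \X$ and $P \times Q' \notin \X$, one has $P' \in \pi_1(C) \setminus \pi_1(\X_{Q,2})$ and $Q' \in \pi_2(C) \setminus \pi_2(\X_{P,1})$, so $t_1, t_2 \geq 1$.

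To conclude: any minimal separator bidegree dominated by that of $F_1$ has the form $(\alpha, b - 1)$ with $a - 1 \leq \alpha \leq a - 1 + t_1$ by the lower bound; the choice $\alpha = a - 1$ is excluded by the non-achievability step, so $\alpha \geq a$. Symmetrically a minimal bidegree dominated by that of $F_2$ is $(a - 1, \beta)$ with $\beta \geq b$. These are distinct and incomparable, so $|\deg_{\X}(P \times Q)| \geq 2$. I expect the main obstacle to be the non-achievability of $(a - 1, b - 1)$: without it the two constructed separators could share a common minimal bidegree below them and the argument collapses; the swap ensuring $b \leq b'$ is the essential trick that makes the forced vanishing $F(P', y) \equiv 0$ work.
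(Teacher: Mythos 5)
Your argument is correct, but it takes a genuinely different route from the paper's. Both proofs open the same way: Theorem \ref{theoremB} disposes of the forward direction, and Theorem \ref{theoremC} supplies witnesses $P\times Q,\ P'\times Q'\in\X$ with $P\times Q',\ P'\times Q\notin\X$. From there the paper splits into four cases according to whether $|\X_{P,1}|$ and $|\X_{Q,2}|$ equal $1$, exhibits two explicit separators in each case, and rules out a common minimal degree $(a-1,b-1)$ by combining Lemma \ref{LemmaD} and Theorem \ref{LemmaE} (which pin down the degree of $P\times Q$ in the ACM subconfiguration $\X_{P,1}\cup\X_{Q,2}$) with the uniqueness statement of Theorem \ref{lemmaA} (which identifies a putative minimal separator with a product of linear forms modulo the ideal, so that it cannot vanish at $P'\times Q'$). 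You replace that machinery with elementary root-counting for binary forms: restricting a separator $F$ to the rulings through $P\times Q$ gives the lower bound $\deg F\succeq(a-1,b-1)$ directly, and the exclusion of $(a-1,b-1)$ itself follows from your normalization $|\X_{P,1}|\le|\X_{P',1}|$ --- legitimate, since the witness condition is symmetric in the two points --- which forces $F(P',y)\equiv 0$, while the factorization of $F(x,Q)$ into the $a-1$ linear forms through $\pi_1(\X_{Q,2})\setminus\{P\}$ forces $F(P',Q)\neq 0$. Your separators $F_1,F_2$ are genuine separators with $t_1,t_2\ge 1$ (because $P'\times Q'$ lies in your set $C$ while $P'\notin\pi_1(\X_{Q,2})$ and $Q'\notin\pi_2(\X_{P,1})$), and the final step is airtight: each of $\deg F_1,\deg F_2$ dominates some minimal separator degree, the lower bound and the excluded degree force these to be $(\alpha,b-1)$ with $\alpha\ge a$ and $(a-1,\beta)$ with $\beta\ge b$, and these are distinct. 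What you lose is the economy of reusing the results of \cite{GVT} and \cite{GVT1}; what you gain is a uniform, case-free argument that is self-contained apart from Theorems \ref{theoremB} and \ref{theoremC}, and that isolates the real crux --- the non-achievability of $(a-1,b-1)$ --- in a transparent evaluation argument rather than an appeal to uniqueness of minimal separators.
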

\begin{proof} In light of Theorem \ref{theoremB}, it suffices to prove the
$(\Leftarrow)$ direction. We will prove the contrapositive
statement: if $\X$ is not ACM, then there exists a point $P \times
Q \in \X$ such that $|\deg_{\X}(P\times Q)| > 1$. Note that
throughout this proof we use that fact that if $P \times Q \in
\X$, then the defining ideal of this point is $(L_P,L_Q)$ in
$k[x_0,x_1,y_0,y_1] = k[\popo]$ where $L_P$ is a form of bidegree
$(1,0)$ and $L_Q$ is a form of bidegree $(0,1)$.

Let $\pi_1(\X) = \{P_1,\ldots,P_r\}$ and $\pi_2(\X) =
\{Q_1,\ldots,Q_s\}$ be the set of first coordinates (respectively,
second coordinates) that appear in $\X$. By Theorem
\ref{theoremC}, there exist points $P \times Q$ and $P' \times Q'$
in $\X$ such that $P \times Q'$ and $P' \times Q$ are not in $\X$.
After relabelling, we can assume $P \times Q = P_1 \times Q_1$ and
$P' \times Q' = P_2 \times Q_2$.

We set $\X_{P_1} = \{P \times Q \in \X ~|~ P = P_1\}$ and $\X_{Q_1} = \{P \times Q \in \X ~|~ Q = Q_1\}$.  We thus have
\begin{align*}
\X_{P_1} = \{P_1 \times Q_1,P_1\times Q_{i_2},\ldots,P_1 \times Q_{i_b}\} \\
\X_{Q_1} = \{P_1 \times Q_1, P_{j_2} \times Q_1,\ldots, P_{j_a}
\times Q_1\}
\end{align*}
Note that $P_1 \times Q_2 \not\in \X_{P_1}$ and $P_2 \times Q_1
\not\in \X_{Q_1}$. There are now four cases to consider. In each
case we will show $|\deg_{\X}(P_1 \times Q_1)| > 1$.

\noindent {\it Case 1:} $|\X_{P_1}| = |\X_{Q_1}| = 1$.

\noindent In this case $P_1 \times Q_1$ is the only point of $\X$
with first coordinate $P_1$ and second coordinate $Q_1$.

The two forms $F_1 = L_{P_2}L_{P_3}\cdots L_{P_r}$ and $F_1 =
L_{Q_2}L_{Q_3}\cdots L_{Q_s}$ are separators of $P_1 \times Q_1$
of degrees $(r-1,0)$ and $(0,s-1)$, respectively. (It is not hard
to see that $F_1$ and $F_2$ pass through all the points of $\X
\setminus \{P_1 \times Q_1\}$.)

If $|\deg_{\X}(P_1 \times Q_1)| = 1$, then there would be a
separator $F$ of $P_1 \times Q_1$ such that $(r-1,0) \succeq \deg
F$ and $(0,s-1) \succeq \deg F$.  But this would mean that $\deg F
= (0,0)$; however, there is no separator of $P_1 \times Q_1$ of
degree $(0,0)$.Thus $|\deg_{\X}(P_1 \times Q_1)|\!>\! 1$.

\noindent {\it Case 2:} $|\X_{P_1}| > 1$ and $|\X_{Q_1}| =1.$

\noindent The two forms $F_1 \!=\! L_{P_2}L_{P_3}\cdots
L_{P_r}L_{Q_{i_2}}L_{Q_{i_3}}\! \cdots\! L_{Q_{i_b}}$ and $F_2\!
=\! L_{Q_2}L_{Q_3}\cdots L_{Q_s}$ are two separators of $P_1
\times Q_1$ in $\X$. If $|\deg_{\X}(P_1 \times Q_1)| = 1$, then
there would exist a separator $F$ such that $(r-1,b-1) \succeq
\deg F$ and $(0,s-1) \succeq \deg F$. That is $(0,b-1) \succeq
\deg F$. Note that $F$ would also be a separator of $P_1 \times
Q_1$ in $\X_{P_1}$, and thus, by Lemma \ref{LemmaD} (1), we will
have $\deg F \succeq (0,b-1)$, In other words, $\deg F = (0,b-1)$.
Because $\deg_{\X_{P_1}}(P_1 \times Q_1) = \{(0,b-1)\}$, by
Theorem \ref{lemmaA} $F$ is the unique (up to scalar
multiplication in $R/I_{\X}$) separator of $P_1 \times Q_1$ in
$\X_{P_1}$. Now because $L_{Q_{i_2}}L_{Q_{i_3}}\cdots L_{Q_{i_b}}$
is another separator of degree $(0,b-1)$ for the point $P_1 \times
Q_1$ in $\X_{P_1}$, we have $F = cL_{Q_{i_2}}L_{Q_{i_3}}\cdots
L_{Q_{i_b}} + H$ for some nonzero scalar $c$ and $H \in I_{\X}$.
However, it then follows that $F(P_2 \times Q_2) \neq 0$,
contradicting the fact that $F$ is a separator of $P_1 \times Q_1$
in $\X$. So, $|\deg_{\X}(P)|> 1$.

\noindent {\it Case 3:} $|\X_{P_1}| = 1$ and $|\X_{Q_1}| >1.$

\noindent The proof is similar to the previous case.

\noindent {\it Case 4:} $|\X_{P_1}| > 1$ and $|\X_{Q_1}| >1.$

\noindent The two forms $$F_1 = L_{P_2}L_{P_3}\cdots
L_{P_r}L_{Q_{i_2}}L_{Q_{i_3}} \cdots L_{Q_{i_b}}$$ and $$F_2 =
L_{P_{i_2}}L_{P_{i_3}}\cdots L_{P_{i_a}}L_{Q_2}L_{Q_3}\cdots
L_{Q_s}$$ are two separators of $P_1 \times Q_1$ of degrees
$(r-1,b-1)$ and $(a-1,s-1)$, respectively. If $|\deg_{\X}(P_1
\times Q_1)| = 1$, then there would exist a separator $F$ of $P_1
\times Q_1$ with $(r-1,b-1) \succeq \deg F$ and $(a-1,s-1) \succeq
\deg F$. In other words, $(a-1,b-1) \succeq \deg F$.  Now such an
$F$ would also be a separator of $P_1 \times Q_1$ in the set of
points $\X' = \X_{P_1} \cup \X_{Q_1}$.  But then by Theorem
\ref{LemmaE}, this would mean $\deg F \succeq (a-1,b-1)$. Thus
$\deg F = (a-1,b-1)$. By Theorem \ref{LemmaE}, the point $P_1
\times Q_1$ in the scheme $\X'$ has $\deg_{\X'}(P_1 \times
Q_1)=\{(a-1,b-1)\}$, and thus by Theorem \ref{lemmaA}, the form
$F$ must be the unique (up to scalar multiplication in $R/I_{\X}$)
separator of $P_1 \times Q_1$ in $\X'$. On the other hand, the
form $F' = L_{P_{i_2}}L_{P_{i_3}}\cdots
L_{P_{i_a}}L_{Q_{i_2}}L_{Q_{i_3}} \cdots L_{Q_{i_b}}$ is also a
separator of degree $(a-1,b-1)$ of $P_1 \times Q_1$ in $\X'$, and
thus $F = cF'+H$ for some $c$ and $H\in I_\X$. But then $F(P_2
\times Q_2) \neq 0$, contradicting the fact that $F$ must pass
through every point if $\X \setminus \{P_1 \times Q_1\}$. Hence,
we must have $|\deg_{\X}(P_1 \times Q_1)| > 1$. \end{proof}

\end{document}